\newcommand{\comment}[1]{}
\newif\ifpdf
\newtheorem{thm}{Theorem}[section]
\newtheorem{observation}[thm]{Observation}
\newtheorem{example}[thm]{Example}
\newtheorem{corollary}[thm]{Corollary}
\newtheorem{lemma}[thm]{Lemma}
\newtheorem{definition}[thm]{Definition}
\newtheorem{theorem}[thm]{Theorem}
\begin{document}

\title{On the distance domination number of bipartite graphs}
\author{{\small D.A. Mojdeh$^{a}$, S.R. Musawi}$^{b}$ {\small , E. Nazari$^{c}$}\\{\small $^{a}$Department of
Mathematics, University of Mazandaran,}{\small Babolsar, Iran}\\ {\small 
damojdeh@umz.ac.ir} \\{\small $^{b}$
\small Department of Mathematics, University of Tafresh,}{\small Tafresh, Iran}\\{\small r\_ musawi@yahoo.com}
\\{\small $^{c}$\small Department of Mathematics, University of Tafresh,}{\small Tafresh, Iran}\\{\small nazari.esmaeil@gmail.com}
}
\date{}
\date{}
\maketitle

\begin{abstract}

A subset $D\subseteq V(G)$ is called a $k$-distance dominating set of G if every vertex in $V (G)-D$ is within distance $k$ from some vertex of $D$. The minimum cardinality among all $k$-distance dominating sets of $G$ is called the $k$-distance domination number of $G$. In this note we give upper bounds on the $k$-distance domination number of a connected bipartite graph, and improve some  results have been given like Theorems 2.1 and 2.7 in [Tian and Xu, A note on distance domination of graphs, Australasian Journal of Combinatorics, 43 (2009), 181-190].\\
\textbf{Keywords:} Domination, $k$-distance domination, connected bipartite graph.\\
\textbf{AMS Subject Classification: }05C69
\end{abstract}

\section{Introduction}

For terminology and notation on graph theory not given here, the
reader is referred to West \cite{w}. Let $G$ be a simple graph
with vertex set $V=V(G)$ and edge set $E=E(G)$. The order of $G$
is denoted by $n=|V(G)|=|V|$ and the size of $G$ is denoted by
$m=|E(G)|=|E|$. The \textit{open neighborhood} of a vertex $v$;
$N(v)$ is the set $\{u \in V:\ uv\in E\}$, and the \textit{closed
neighborhood} of a vertex $v$ is the set $N[v] = N(v) \cup
\{v\}$. For a vertex $v\in V$, the \textit{degree} of $v$ is
$\deg_G(v)=\deg(v)=|N(v)|$. The \textit{open
neighborhood} of a set $S\subseteq V$ is the set $N(S)= \cup_{v\in S} N(v) $%
, and the \textit{closed neighborhood} of $S$ is the set $N[S]=N(S)\cup S$. The
\textit{minimum degree} and \textit{maximum degree} of a graph $G$ are denoted by $%
\delta=\delta(G)$ and $\Delta=\Delta(G)$, respectively.  The \textit{open
$k$-neighborhood }of a vertex $v\in V$, denoted $N_{k}(v)$, is the
set $N_{k}(v)=\{u: u\neq v    \ \textit{and} \ d(u,v)\leq k\}$, in the other words $N_{k}(v)$ is the set
of vertices in within distance $k$ of vertex $v$. The
set $N_{k}[v]= N_{k}(v) \cup \{v\}$ is said to be the\textit{ closed
$k$-neighborhood} of $v$.\\
A set $S\subseteq V$ is a \textit{dominating set}
if every vertex in $V$ is either in $S$ or is adjacent to a vertex
in $S$. The \textit{domination number} $\gamma (G)$ is the minimum
cardinality of a dominating set of $G$.
A subset $S\subseteq V$ is a \textit{$k$-distance dominating set} if every
vertex in $V-S$ is within distance $k$ of at least one vertex in
$S$. In the other words, if $S\subseteq V$ is a
$k$-distance dominating set of $G$, then $N_{k}[S]=V$. The $k$-distance
domination number $\gamma_{k}(G)$ of $G$ equals the minimum
cardinality of a $k$-distance dominating set in $G$, for further see,  \cite{hmv, hhs1, msaz}.
The $k$th power graph of $G$ is the
graph with vertex set $V(G)$ and two vertices are adjacent in $G^k$ if they are
joined in $G$ by a path of length at most $k$. Note that $\gamma _{k}(G)$ equals to $\gamma (G^{k})$, where $G^{k}$ is the $k$th power graph of $G$, see \cite{hhs1, tx}.

%
\section{Previous known results}

Tian and Xu \cite{tx} studied $k$-distance domination number in graphs. They have proved the following results.
\begin{theorem}[Tian and Xu \cite{tx}, Theorem 2.1]\label{thm2.1} Let $G$ be a connected graph with vertex set $V=\{1,2,\cdots,n\}$. Then $$\gamma_k(G)\le \underset{(p_1,p_2,\cdots,p_n)\in (0,1)^n}{\min}\underset{i=1}{\overset{n}{  \sum}}\Big( p_i+(1-p_i)\underset{j\in N_k(i)}{\Pi}(1-p_j)\Big)$$ where $p_i\in (0,1)$ is the probability of existence of the vertex $i$ in a random subset of $V$.
\end{theorem}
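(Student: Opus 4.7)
The plan is to apply the probabilistic method (a deletion-style argument) directly. Given a probability vector $(p_1,\ldots,p_n) \in (0,1)^n$, I would form a random set $A \subseteq V$ by including each vertex $i$ independently with probability $p_i$. This $A$ alone need not be $k$-distance dominating, so I would enlarge it: let
$$B = \{\,i \in V : i \notin A \text{ and } N_k(i) \cap A = \emptyset\,\}$$
be the set of vertices that are neither chosen nor within distance $k$ of a chosen vertex. Then by construction $S := A \cup B$ satisfies $N_k[S] = V$, so $S$ is a $k$-distance dominating set, and in particular $\gamma_k(G) \le |S| = |A| + |B|$ (the union is disjoint).

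Next I would compute $\mathbb{E}[|S|]$ by linearity of expectation. Clearly $\mathbb{E}[|A|] = \sum_{i=1}^n p_i$. For $|B|$, a vertex $i$ lies in $B$ iff $i$ itself is not selected (probability $1-p_i$) and no vertex of $N_k(i)$ is selected; since the choices are independent,
$$\Pr[i \in B] = (1-p_i)\prod_{j \in N_k(i)}(1-p_j).$$
Summing and combining gives
$$\mathbb{E}[|S|] = \sum_{i=1}^{n}\Bigl( p_i + (1-p_i)\prod_{j \in N_k(i)}(1-p_j)\Bigr).$$
Because there must exist at least one outcome in the sample space for which $|S|$ does not exceed its expectation, we conclude $\gamma_k(G) \le \mathbb{E}[|S|]$. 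Taking the infimum over all $(p_1,\ldots,p_n) \in (0,1)^n$ yields the claimed bound.

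The argument is essentially routine once the correct random construction is identified; there is no serious obstacle. The only subtlety worth flagging is the disjointness of $A$ and $B$ (so that $|A \cup B| = |A| + |B|$ rather than a weaker inequality), which is why $B$ is defined with the condition $i \notin A$ built in. One might also note that the open version $N_k(i)$ is the right object to use here: the factor $(1-p_i)$ accounts for $i \notin A$, while the product over $N_k(i)$ accounts for the absence of a $k$-dominator of $i$, so the indices range over the correct set and no vertex is double-counted inside the product.
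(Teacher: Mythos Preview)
Your argument is correct and is precisely the standard alteration (deletion) argument from the probabilistic method: pick vertices independently, then repair the random set by adjoining the undominated vertices, and bound $\gamma_k(G)$ by the expected size. Note, however, that the present paper does \emph{not} supply its own proof of this statement; Theorem~\ref{thm2.1} is quoted from Tian and Xu \cite{tx} as a previously known result, so there is nothing in the paper to compare against. Your write-up is exactly the argument one would expect to find in \cite{tx} (and in Alon--Spencer \cite{as}, which the paper also cites), so in that sense the approaches coincide.
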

Then they considered connected bipartite graph.
\begin{lemma}[Tian and Xu \cite{tx}, Lemma 2.5]\label{le1} Let $G$ be a connected bipartite graph
with bipartition $V_1$ and $V_2$, where $|V_j | = n_j$ and
$\delta_j=\min \{\deg(v): v\in V_j\}$, for $j=1,2$.\\
For any vertex $v\in V_{1}$ with $N_k[v]\neq V$,
\begin{equation}\label{eq1}|N_{k}(v)\cap V_{1}|\geq (\lceil k/6 \rceil-1)(\delta_2+1),\end{equation}
\begin{equation}\label{eq2}|N_{k}(v)\cap V_{2}|\geq \lceil k/6 \rceil(\delta_1+1)-1.\end{equation}
Similarly, for any vertex $v\in V_2$ with $N_k[v]\neq V$,
\begin{equation}\label{eq3}|N_{k}(v)\cap V_{1}|\geq   \lceil k/6 \rceil(\delta_2+1)-1,  \end{equation}
\begin{equation}\label{eq4}|N_{k}(v)\cap V_{2}|\geq  (\lceil k/6 \rceil-1)(\delta_1+1). \end{equation}
\end{lemma}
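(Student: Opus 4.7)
The plan is to pick a geodesic leaving $v$, select a sparse sub-sequence whose open neighborhoods are pairwise disjoint, and then sum contributions on each side of the bipartition. Since $N_k[v]\ne V$ and $G$ is connected, there is a vertex $w$ with $d(v,w)\ge k+1$; fix a shortest $v$--$w$ path $P\colon v=u_0,u_1,\ldots,u_t$ with $t\ge k+1$. The shortest-path property and bipartiteness give two key facts: $d(u_i,u_j)=|i-j|$ for all $0\le i,j\le t$, and (when $v\in V_1$) $u_i\in V_1$ iff $i$ is even. The crucial structural consequence is that if $|i-j|\ge 3$ then $u_i$ and $u_j$ have no common neighbor, for otherwise $d(u_i,u_j)\le 2$.

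Next I would select the sub-sequence $u_0,u_3,u_6,\ldots,u_{3r}$ with $r=\lfloor(k-1)/3\rfloor$; this choice ensures $3i+1\le k$ and hence $N(u_{3i})\subseteq N_k[v]$ for every selected index. The $r+1$ open neighborhoods $N(u_{3i})$ are pairwise disjoint by the observation above, and no selected $u_{3j}$ lies in any $N(u_{3i})$ (that would force distance $1$). Splitting by parity of $i$: the $\lfloor r/2\rfloor+1$ even values give $u_{3i}\in V_1$ whose $\ge\delta_1$ neighbors contribute to $N_k(v)\cap V_2$; the $\lceil r/2\rceil$ odd values give $u_{3i}\in V_2$ that is itself in $N_k(v)\cap V_2$ and has $\ge\delta_2$ neighbors contributing to $N_k(v)\cap V_1$. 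Discarding $u_0=v$ from the $V_1$ count, the disjointness yields
\[
|N_k(v)\cap V_2|\ \ge\ (\lfloor r/2\rfloor+1)\delta_1+\lceil r/2\rceil,\qquad |N_k(v)\cap V_1|\ \ge\ \lceil r/2\rceil\delta_2+\lfloor r/2\rfloor.
\]

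A routine arithmetic verification with $r=\lfloor(k-1)/3\rfloor$ gives $\lfloor r/2\rfloor=\lceil k/6\rceil-1$ and $\lceil r/2\rceil\ge\lceil k/6\rceil-1$, which converts the two displayed lower bounds exactly into $\lceil k/6\rceil(\delta_1+1)-1$ and $(\lceil k/6\rceil-1)(\delta_2+1)$, proving \eqref{eq1}--\eqref{eq2}. The inequalities \eqref{eq3}--\eqref{eq4} then follow by the same argument with the roles of $V_1$ and $V_2$ interchanged. The main delicacy I foresee is the parity bookkeeping: the ``$-1$'' in \eqref{eq2} must be tracked to the odd-indexed selected vertices themselves (not to any neighbor count), and one has to remember to exclude $u_0=v$ when tallying $|N_k(v)\cap V_1|$.
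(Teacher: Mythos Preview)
Your argument is correct: the geodesic gives $d(u_i,u_j)=|i-j|$, so vertices spaced three apart have pairwise disjoint open neighborhoods, and the parity split together with the identity $\lfloor r/2\rfloor=\lfloor(k-1)/6\rfloor=\lceil k/6\rceil-1$ (and $\lceil r/2\rceil\ge\lfloor r/2\rfloor$) delivers \eqref{eq1}--\eqref{eq2}. One cosmetic remark: ``converts \dots\ exactly'' is a slight overstatement, since you use $\lceil r/2\rceil\ge\lceil k/6\rceil-1$ rather than equality; the inequalities nonetheless go the right way.

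The paper does not give a direct proof of Lemma~\ref{le1}; it is quoted from Tian--Xu, and the paper instead proves the sharper Lemma~\ref{m1} and then checks by a case table that the new bounds dominate \eqref{eq1}--\eqref{eq4}. That proof uses a different decomposition: the distance layers $X_l(v)=\{u:d(v,u)=l\}$ and the inequality $|X_{l-1}(v)|+|X_{l+1}(v)|\ge\deg(x_l)$, which lets one pair consecutive same-side layers with period~$4$ and obtain $\lceil k/4\rceil$-type coefficients. Your disjoint-neighborhood idea with spacing~$3$ is the natural route to the $\lceil k/6\rceil$ constants and is shorter and cleaner for Lemma~\ref{le1} as stated; the layer decomposition is what one needs to push the period down from~$6$ to~$4$ and get the improvement in Lemma~\ref{m1}.
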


A connected bipartite graph $G$ is said to be \textit{perfect} if $\delta_1\delta_2>1$, $n_1[M(\delta_1+1)-1]>n_2[(M-1)(\delta_2+1)+1]$ and $n_2[M(\delta_2+1)-1]>n_1[(M-1)(\delta_1+1)+1]$, where $M=\lceil k/6 \rceil$. A simple calculation shows that a connected bipartite graph is perfect if and only if $n_1-n_2\delta_2< M[n_1(\delta_1+1)-n_2 (\delta_2+1)]<n_1 \delta_1-n_2$. As a consequence of Lemma \ref{le1} and Theorem \ref{thm2.1}, Tian and Xu obtained the following.

\begin{theorem}[Tian and Xu \cite{tx}, Theorem 2.7]\label{thm2.7}
Let $G$ be a perfect bipartite graph and $$0<p_1=\frac{[(M-1)(\delta_1+1)+1]\ln u-[M(\delta_1+1)-1]\ln v}{(2M-1)(\delta_1\delta_2-1)}<1$$
 $$ 0<p_2=\frac{[(M-1)(\delta_2+1)+1]\ln v-[M(\delta_2+1)-1]\ln u}{(2M-1)(\delta_1\delta_2-1)}<1,$$
where $u=\frac{n_2[M(\delta_2+1)-1]-n_1[(M-1)(\delta_1+1)+1]}{n_1(2M-1)(\delta_1\delta_2-1)}$ and  $v=\frac{n_1[M(\delta_1+1)-1]-n_2[(M-1)(\delta_2+1)+1]}{n_2(2M-1)(\delta_1\delta_2-1)}.$ Then
$$\gamma_k(G)\le h(p_1,p_2)\le \underset{0< p <1}{\min}h(p,p) \le \frac{n(1+\ln[(2M-1)(\delta+1)])}{(2M-1)(\delta+1)},$$
where $M=\lceil k/6\rceil$.
\end{theorem}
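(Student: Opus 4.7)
The plan is to apply Theorem~\ref{thm2.1} with a probability vector constant on each side of the bipartition: assign $p_1$ to every vertex of $V_1$ and $p_2$ to every vertex of $V_2$, and write $M=\lceil k/6\rceil$. Splitting the bound of Theorem~\ref{thm2.1} along the bipartition, and assuming without loss of generality that no vertex $v$ satisfies $N_k[v]=V$ (otherwise $\gamma_k(G)\le1$ and the conclusion is trivial), Lemma~\ref{le1} applies at every $i$. For $i\in V_1$ the product $\prod_{j\in N_k(i)}(1-p_j)$ is at most $(1-p_1)^{(M-1)(\delta_2+1)}(1-p_2)^{M(\delta_1+1)-1}$; multiplying by $(1-p_1)$ and invoking $1-x\le e^{-x}$ yields $e^{-Ap_1-Bp_2}$ with $A=(M-1)(\delta_2+1)+1$ and $B=M(\delta_1+1)-1$. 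The analogous estimate on $V_2$ produces $e^{-Cp_1-Dp_2}$ with $C=M(\delta_2+1)-1$ and $D=(M-1)(\delta_1+1)+1$. Summing yields
$$\gamma_k(G)\le h(p_1,p_2):=n_1p_1+n_2p_2+n_1e^{-Ap_1-Bp_2}+n_2e^{-Cp_1-Dp_2}.$$

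To locate the optimal $(p_1,p_2)$ I set $\partial h/\partial p_i=0$. Writing $X=n_1e^{-Ap_1-Bp_2}$ and $Y=n_2e^{-Cp_1-Dp_2}$ turns these first-order conditions into the linear system $AX+CY=n_1$, $BX+DY=n_2$. A short expansion in $\alpha=\delta_1+1$, $\beta=\delta_2+1$ gives
$$AD-BC=[(M-1)^2-M^2]\alpha\beta+(2M-1)(\alpha+\beta)=-(2M-1)(\delta_1\delta_2-1),$$
nonzero in the perfect regime $\delta_1\delta_2>1$. Cramer's rule then delivers $X=n_1u$ and $Y=n_2v$ with $u,v$ exactly the quantities in the statement; the ``perfect'' hypothesis $n_1-n_2\delta_2<M[n_1(\delta_1+1)-n_2(\delta_2+1)]<n_1\delta_1-n_2$ is equivalent to $n_2[M(\delta_2+1)-1]>n_1[(M-1)(\delta_1+1)+1]$ together with its symmetric partner, and these force $u,v>0$. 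Taking logarithms and solving $Ap_1+Bp_2=-\ln u$, $Cp_1+Dp_2=-\ln v$ (again by Cramer) reproduces exactly the stated formulas for $p_1,p_2$. Convexity of $h$ (it is a sum of linear and exponential-of-linear terms) promotes this critical point to a global minimum.

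The middle inequality $h(p_1,p_2)\le\min_{0<p<1}h(p,p)$ is immediate, because the diagonal $\{(p,p):0<p<1\}$ is a subset of the domain over which $(p_1,p_2)$ minimizes $h$. For the last inequality I specialize to $p_1=p_2=p$: summing (\ref{eq1})+(\ref{eq2}) (or (\ref{eq3})+(\ref{eq4})) and using $\delta=\min\{\delta_1,\delta_2\}$ yields $|N_k[i]|\ge(2M-1)(\delta+1)$ for every $i$, so $h(p,p)\le np+ne^{-(2M-1)(\delta+1)p}$. This one-variable expression is minimized at $p^\ast=\ln[(2M-1)(\delta+1)]/[(2M-1)(\delta+1)]$, where its value equals $n(1+\ln[(2M-1)(\delta+1)])/[(2M-1)(\delta+1)]$.

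The principal obstacle is the algebraic bookkeeping around Cramer's rule: verifying the cancellation $AD-BC=-(2M-1)(\delta_1\delta_2-1)$ and matching the asymmetric expressions for $p_1,p_2$ back to the statement. The notion of ``perfect bipartite graph'' is designed precisely so that all denominators, logarithms, and positivity requirements in this derivation are well-defined; once this algebra is in place, the remaining steps are routine multivariable and one-variable calculus.
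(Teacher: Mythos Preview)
Your proposal is correct and follows exactly the intended route. The paper does not itself supply a proof of Theorem~\ref{thm2.7} (it is quoted from Tian and Xu), but your derivation---apply Theorem~\ref{thm2.1} with probabilities constant on each part, plug in the lower bounds of Lemma~\ref{le1}, pass to exponentials via $1-x\le e^{-x}$, and then optimize the resulting two-variable convex function by Cramer's rule---is precisely the template the present paper uses to prove its own Theorem~3.2 (with Lemma~\ref{m1} replacing Lemma~\ref{le1}); your algebra for $AD-BC$, $u$, $v$, $p_1$, $p_2$ and the diagonal bound all check out.
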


In this manuscript we improve Theorem \ref{thm2.7} via improving the Lemma \ref{le1}.

%
\section{Main results}

In order to improve Theorem \ref{thm2.7}, we first improve Lemma \ref{le1}.

\begin{lemma}\label{m1}
Let $G$ be a connected bipartite graph with bipartition $V_1$ and $V_2$, where $|V_j | = n_j$ and $\delta_j=\min \{\deg(v): v\in V_j\}$, for $j=1,2$. Then\\
$(i)$ For any vertex $v\in V_{1}$ with $N_k[v]\neq V$,
\begin{equation}\label{eq5}
|N_{k}(v)\cap V_{1}|\geq \lceil (k-1)/4 \rceil \max\{2,\delta_2\}+2\lfloor k/4\rfloor -\lfloor k/2\rfloor,
\end{equation}
\begin{equation}\label{eq6}
|N_{k}(v)\cap V_{2}|\geq \delta_1+(\lceil k/4 \rceil -1) \max\{2,\delta_1\}+\lfloor(k-1)/2\rfloor -2\lfloor (k-1)/4\rfloor.
\end{equation}
Furthermore, (\ref{eq5}) and (\ref{eq6}), improve (\ref{eq1}) and (\ref{eq2}), repectively.\\
$(ii)$ For any vertex $v\in V_2$ with $N_k[v]\neq V$,
\begin{equation}\label{eq7}
|N_{k}(v)\cap V_{1}|\geq  \lceil k/4 \rceil \max\{2,\delta_2\}+\lfloor(k-1)/2\rfloor -2\lfloor (k-1)/4\rfloor,
\end{equation}
\begin{equation}\label{eq8}
|N_{k}(v)\cap V_{2}|\geq  \lceil (k-1)/4 \rceil \max\{2,\delta_1\}+2\lfloor k/4\rfloor -\lfloor k/2\rfloor.
\end{equation}
Furthermore, (\ref{eq7}) and (\ref{eq8}) improve (\ref{eq3}) and (\ref{eq4}), respectively.
\end{lemma}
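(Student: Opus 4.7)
The plan is to fix a geodesic from $v$ to a vertex just outside $N_k[v]$ and accumulate $N_k(v)$ from the disjoint neighborhoods of path vertices that are sufficiently far apart. Since $G$ is connected and $N_k[v]\ne V$, there is a vertex at distance exactly $k+1$ from $v$; let $v=v_0,v_1,\ldots,v_{k+1}$ be a shortest such path. Every sub-path of a geodesic is itself a geodesic, so $d(v_i,v_j)=|i-j|$ for all $0\le i,j\le k+1$, and bipartiteness forces $v_i\in V_1$ iff $i$ is even (when $v\in V_1$). The key disjointness observation is: if $v_i,v_j$ lie in the same bipartition class and $|i-j|\ge 4$, then $N(v_i)\cap N(v_j)=\emptyset$, because a common neighbor would yield a walk of length $2$ between $v_i$ and $v_j$, contradicting $d(v_i,v_j)=|i-j|\ge 4$.

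For (\ref{eq6}), set $q=\lfloor (k-1)/4\rfloor=\lceil k/4\rceil-1$ and consider the disjoint $V_2$-subsets $N(v_{4j})$ for $j=0,1,\ldots,q$, where the condition $4j+1\le k$ ensures $N(v_{4j})\subseteq N_k(v)$. Here $|N(v_0)|\ge\delta_1$, while for $j\ge 1$ the on-path neighbors $v_{4j\pm 1}$ force $|N(v_{4j})|\ge\max\{2,\delta_1\}$; summing yields $\delta_1+(\lceil k/4\rceil-1)\max\{2,\delta_1\}$. The residual term $\lfloor(k-1)/2\rfloor-2\lfloor(k-1)/4\rfloor$ equals $1$ precisely when $k\equiv 0,3\pmod 4$, in which case the path vertex $v_{4q+3}\in V_2\cap N_k(v)$ sits outside every $N(v_{4j})$ (its distance $4q+3$ from $v$ is not of the form $4j\pm 1$) and supplies the extra $1$. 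For (\ref{eq5}), set $Q=\lceil(k-1)/4\rceil$ and switch to the disjoint $V_1$-subsets $N(v_{4j-3})$ for $j=1,\ldots,Q$, each of size $\ge\max\{2,\delta_2\}$; subtract $1$ because $v_0=v\in N(v_1)$ must be excluded from $N_k(v)$, and add $1$ when the path vertex $v_{4Q}\in V_1\cap N_k(v)$ avoids all the chosen neighborhoods. A case check modulo $4$ shows these corrections combine into the stated $2\lfloor k/4\rfloor-\lfloor k/2\rfloor$. Part (ii) is then obtained by interchanging $V_1\leftrightarrow V_2$ and $\delta_1\leftrightarrow\delta_2$ throughout.

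The geometric content is entirely captured in the disjointness observation; the main obstacle will be the bookkeeping needed to match the boundary $\pm 1$ corrections to the stated floor/ceiling expressions, which reduces to a routine case analysis modulo $4$. The ``Furthermore'' assertions that (\ref{eq5})--(\ref{eq8}) improve (\ref{eq1})--(\ref{eq4}) then follow from the coefficient comparison $\lceil k/4\rceil\ge\lceil k/6\rceil$ together with a term-by-term comparison of the new and old lower-order expressions; the handful of small $k$ in which the correction terms could conceivably reverse the inequality are dispatched by direct substitution.
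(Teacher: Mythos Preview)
Your argument is correct, but it is organized differently from the paper's. The paper decomposes $N_k(v)$ into the distance layers $X_l(v)=\{u:d(v,u)=l\}$ and, using the same geodesic $v=x_0x_1\cdots$, bounds pairs of layers via $N(x_l)\subseteq X_{l-1}(v)\cup X_{l+1}(v)$, so that $|X_{4m}(v)|+|X_{4m+2}(v)|\ge\deg(x_{4m+1})\ge\max\{2,\delta_2\}$; the sum over $m$ (together with a separate estimate $|X_2(v)|\ge\max\{2,\delta_2\}-1$) gives (\ref{eq5}), and analogously for (\ref{eq6}). You instead skip the layers and work directly with the pairwise disjoint neighborhoods $N(v_{4j})$ (respectively $N(v_{4j-3})$) along the geodesic, using the triangle-inequality observation that $|i-j|\ge4$ forces $N(v_i)\cap N(v_j)=\emptyset$. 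Both routes hinge on the same geometric fact---neighbors of a geodesic vertex lie in the two adjacent layers---but your packaging is a bit more economical: it avoids the paper's case split on whether $\frac{\lfloor k/2\rfloor-1}{2}$ is an integer and folds the initial $X_2$ estimate into the $j=1$ term. The paper's layer viewpoint, on the other hand, makes it transparent that \emph{all} of $N_k(v)$ is being accounted for and that the leftover boundary layer contributes the stray $+1$. For the ``Furthermore'' claims, the paper carries out the comparison explicitly by writing $k=12p+q$ and tabulating $q=1,\dots,12$; your sketch via $\lceil k/4\rceil\ge\lceil k/6\rceil$ plus direct substitution for small $k$ is the same argument in outline and would need the same finite check to be complete.
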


\begin{proof}
Let $G$ be a connected bipartite graph with bipartition $V_1$ and
$V_2$, where $|V_j | = n_j$ and $\delta_j=\min \{\deg(v): v\in
V_j\}$, for $j=1,2$. For any vertex $v$ and any integer $l$ with
$1\leq l \leq k$, let $X_l(v)=\{u\in V | d(v,u)=l\}$. It is obvious that
$N_{k}(v)=X_1(v)\cup X_2(v)\cup
\dots\cup X_k(v).$ Furthermore, $X_1(v)$, $X_2(v)$,...,and $\dots, X_k(v)$ are pairly disjoint.

$(i)$ Let $v\in V_{1}$ be a vertex with $N_k[v]\neq V$. Observe
that $ X_1(v)\cup X_3(v)\cup \dots\cup X_{2\lfloor
(k+1)/2\rfloor-1}(v) \subseteq V_2$, $X_2(v)\cup X_4(v)\cup
\dots\cup X_{2\lfloor k/2\rfloor}(v) \subseteq V_1$, and
\begin{equation*}\label{eq9}
N_{k}(v)\cap V_{1}= \underset{m=1}{\overset{\lfloor k/2\rfloor
}{\bigcup}}X_{2m}(v),\ \ N_{k}(v)\cap V_{2}=
\underset{m=1}{\overset{\lfloor (k+1)/2\rfloor }\bigcup
}X_{2m-1}(v)
\end{equation*}
Thus $|N_{k}(v)\cap V_{1}|= \underset{m=1}{\overset{\lfloor
k/2\rfloor }{\sum}}|X_{2m}(v)|$ and $| N_{k}(v)\cap V_{2}|=
\underset{m=1}{\overset{\lfloor (k+1)/2\rfloor
}{\sum}}|X_{2m-1}(v)|$. Since $N_k[v]\neq V$, there exists a
vertex $u$ such that $d(v,u)>k$. Then there exists a path,
$P:=vx_1x_2\dots u$ of length of at least $k+1$.  For $l=1,2,\cdots,k$, $X_{l}(v)\neq
\emptyset$, because $x_l\in X_l(v)$. Moreover, if $l$
is odd, then $\deg(x_l)\ge \max\{2,\delta_2 \}$, because $x_l\in V_2$; while if
$l$ is even, then $\deg(x_l)\ge \max\{2,\delta_1\}$,  because $x_l\in V_1$. We proceed with the following claims.

\textbf{Claim} 1. $|X_2(v)|\geq \max\{2,\delta_2\}-1 \geq \delta_2-1.$\\
To see this, note that since $x_1\in X_1(v)\subseteq V_2$, we have
$|X_2(v)|=\deg(x_1)-1$. Since $\deg(x_1)\ge \max\{2,\delta_1\}$, we find that $|X_2(v)|\geq max\{2,\delta_2\}-1$, as desired.

\textbf{Claim} 2. For $2\leq l\leq k-1$,  $|X_{l-1}(v)|+|X_{l+1}(v)|\geq \deg(x_l)$.\\
To see this, note that for $2\leq l\leq k-1$, we have $N_1(x_l)=N(x_l)\subseteq  X_{l-1}(v)\cup  X_{l+1}(v)$, since $x_l\in X_l(v)$.

By Claim 2, $|X_{4m}(v)|+|X_{4m+2}(v)|\geq \deg(x_{4m+1})$ for every $m=1,2,...,\lfloor\frac{\lfloor k/2\rfloor-1}{2}\rfloor$. To compute $|N_{k}(v)\cap V_{1}|$, we discuss  on $\frac{\lfloor k/2\rfloor-1}{2}$ which may be an  integer or not.

First we assume that $\frac{\lfloor k/2\rfloor-1}{2}$ is an integer. Then
 \begin{align*}
|N_{k}(v)\cap V_{1}|&= \underset{m=1}{\overset{\lfloor k/2\rfloor }{\sum}}|X_{2m}(v)| \\
& =  |X_2(v)|+\underset{m=2}{\overset{\lfloor k/2\rfloor }{\sum}}|X_{2m}(v)|\\&=  |X_2(v)|+ \underset{m'=1}{\overset {(\lfloor k/2\rfloor-1)/2 }{\sum}}(|X_{4m'}(v)|+|X_{4m'+2}(v)|)\\
&\geq \max\{2,\delta_2\}-1+ \underset{m'=1}{\overset {(\lfloor
k/2\rfloor-1)/2 }{\sum}}\max\{2,\delta_2\} \ \ \ \ \  (by \   Claims \ 1 \ and \ 2).
\end{align*}
Thus $|N_{k}(v)\cap V_{1}|\geq (\lfloor k/2\rfloor+1)\max\{2,\delta_2\}/2-1$ and a simple calculation shows that $(\lfloor k/2\rfloor+1)\max\{2,\delta_2\}/2-1=\lceil (k-1)/4 \rceil \max\{2,\delta_2\}+2\lfloor k/4\rfloor -\lfloor k/2\rfloor$, as desired.

Next we assume that $\frac{\lfloor k/2\rfloor-1}{2}$ is not an integer. Then
\begin{align*}
|N_{k}(v)\cap V_{1}|&= \underset{m=1}{\overset{\lfloor k/2\rfloor }{\sum}}|X_{2m}(v)| \\
& =  |X_2(v)|+\underset{m=2}{\overset{\lfloor k/2\rfloor-1 }{\sum}}|X_{2m}(v)|+|X_{2\lfloor k/2\rfloor}|\\&=  |X_2(v)|+ \underset{m'=1}{\overset {(\lfloor k/2\rfloor-2)/2 }{\sum}}(|X_{4m'}(v)|+|X_{4m'+2}(v)|)+|X_{2\lfloor k/2\rfloor}|\\
&\geq \max\{2,\delta_2\}-1+ \underset{m'=1}{\overset {(\lfloor
k/2\rfloor-2)/2 }{\sum}}\max\{2,\delta_2\}+1 \ \ \ \ \  (by \   Claims \ 1 \ and \ 2).
\end{align*}

Thus $|N_{k}(v)\cap V_{1}|\geq \lfloor k/2\rfloor\max\{2,\delta_2\}/2$ and a simple calculation shows that $\lfloor k/2\rfloor\max\{2,\delta_2\}/2= \lceil (k-1)/4 \rceil \delta_2+2\lfloor k/4\rfloor -\lfloor k/2\rfloor$, as desired.

Hence, inequality (\ref{eq5}) holds. We next prove the inequality (\ref{eq6}). Since $\deg(v)\geq \delta_1$ and $N(v)=X_1(v)\subseteq V_2$, we find that $|X_1(v)|\geq \delta_1$.

From Claim 2, we can easily see that $|X_{4m-1}(v)|+|X_{4m+1}(v)|\geq \deg(x_{4m})\ge \max\{2,\delta_1\}$ for every $m=1,2,...,\lfloor\frac{k-1}{4}\rfloor$. We discuss  on $\frac{\lfloor (k+1)/2\rfloor}{2}$ which may be an  integer or not.  

First we assume that $\frac{\lfloor (k+1)/2\rfloor}{2}$ is an integer. Then
\begin{align*}
|N_{k}(v)\cap V_{2}|&= \underset{m=1}{\overset{\lfloor (k+1)/2\rfloor }{\sum}}|X_{2m-1}(v)| \\
& =  |X_1(v)|+\underset{m=2}{\overset{\lfloor (k+1)/2\rfloor }{\sum}}|X_{2m-1}(v)|\\
&=  |X_1(v)|+ \underset{m'=1}{\overset {\lfloor (k+1)/2\rfloor/2-1  }{\sum}}(|X_{4m'-1}(v)|+|X_{4m'+1}(v)|)+|X_{2\lfloor (k+1)/2\rfloor -1}(v)|\\
&\geq \delta_1+ \underset{m'=1}{\overset {\lfloor
(k+1)/4\rfloor-1 }{\sum}}\max\{2,\delta_1\}+1 \ \ \ \ \  (by \   Claim \ 2).
\end{align*}
Thus $|N_{k}(v)\cap V_{2}| \geq \delta_1+(\lfloor(k+1)/4\rfloor-1)\max\{2,\delta_1\}+1$. Now a simple calculation shows that $ \delta_1+(\lfloor(k+1)/4\rfloor-1)\max\{2,\delta_1\}+1=\delta_1+(\lceil k/4 \rceil -1) \max\{2,\delta_1\}+\lfloor(k-1)/2\rfloor -2\lfloor (k-1)/4\rfloor$ as desired.

Next we assume that $\frac{\lfloor (k+1)/2\rfloor}{2}$ is not an integer. Then
\begin{align*}
|N_{k}(v)\cap V_{2}|&= \underset{m=1}{\overset{\lfloor (k+1)/2\rfloor }{\sum}}|X_{2m-1}(v)| \\
& =  |X_1(v)|+\underset{m=2}{\overset{\lfloor (k+1)/2\rfloor }{\sum}}|X_{2m-1}(v)|\\
&=  |X_1(v)|+ \underset{m'=1}{\overset {(\lfloor (k+1)/2\rfloor-1)/2  }{\sum}}(|X_{4m'-1}(v)|+|X_{4m'+1}(v)|)\\
&\geq \delta_1+ \underset{m'=1}{\overset {\lfloor (k-1)/4\rfloor
}{\sum}}\max\{2,\delta_1\} \ \ \ \ \  (by \   Claim \ 2).
\end{align*}
Thus $|N_{k}(v)\cap V_{2}| \geq \delta_1+\lfloor (k-1)/4\rfloor\max\{2,\delta_1\}$. Now a simple calculation shows that $ \delta_1+\lfloor(k-1)/4\rfloor\max\{2,\delta_1\}=\delta_1+(\lceil k/4 \rceil -1) \max\{2,\delta_1\}+\lfloor(k-1)/2\rfloor -2\lfloor (k-1)/4\rfloor$ as desired.

We next show that inequality \ref{eq5} is an improvement of inequality \ref{eq1}. We will show that:
\[  \lceil \frac{k-1}{4}\rceil \max\{2,\delta_2\}+2\lfloor \frac{k}{4}\rfloor -\lfloor \frac{k}{2}\rfloor\geq (\lceil \frac{k}{6} \rceil-1)(\delta_2+1)  \]
It is obvious that if $\delta_2=1$, then the left side of the above inequality is $2\lceil \frac{k-1}{4}\rceil+2\lfloor \frac{k}{4}\rfloor -\lfloor \frac{k}{2}\rfloor=\lfloor \frac{k}{2}\rfloor$ and the right side is $2(\lceil \frac{k}{6} \rceil-1)$, and clearly $2\lceil \frac{k-1}{4}\rceil+2\lfloor \frac{k}{4}\rfloor -\lfloor \frac{k}{2}\rfloor=\lfloor \frac{k}{2}\rfloor\geq 2(\lceil \frac{k}{6} \rceil-1)$ for $k\ge 1$. Thus assume that $\delta_2\ge2$. We show that
\[ ( \lceil \frac{k-1}{4}\rceil-\lceil \frac{k}{6} \rceil+1) \delta_2 \geq \lceil \frac{k}{6} \rceil-1 -2\lfloor \frac{k}{4}\rfloor +\lfloor \frac{k}{2}\rfloor \]
for $k\geq1$.
Let $L=( \lceil \frac{k-1}{4}\rceil-\lceil \frac{k}{6} \rceil+1) \delta_2 $ and $ R= \lceil \frac{k}{6} \rceil-1 -2\lfloor \frac{k}{4}\rfloor +\lfloor \frac{k}{2}\rfloor$. We thus show that $L\geq R$. Let $k=12p+q$, where $1\leq q \leq 12$. Then
\[  L=( \lceil \frac{k-1}{4}\rceil-\lceil \frac{k}{6} \rceil+1) \delta_2= p \delta_2+(\lceil \frac{q-1}{4}\rceil-\lceil \frac{q}{6} \rceil+1) \delta_2.    \]
\[  R= \lceil \frac{k}{6} \rceil-1 -2\lfloor \frac{k}{4}\rfloor +\lfloor \frac{k}{2}\rfloor =2p+ \lceil \frac{q}{6} \rceil-1 -2\lfloor \frac{q}{4}\rfloor +\lfloor \frac{q}{2}\rfloor.   \]
Since $\delta_2\geq 2$, we have $p\delta_2\geq 2p$. Thus we need to show that $(\lceil \frac{q-1}{4}\rceil-\lceil \frac{q}{6} \rceil+1) \delta_2\geq \lceil \frac{q}{6} \rceil-1 -2\lfloor \frac{q}{4}\rfloor +\lfloor \frac{q}{2}\rfloor.$ Since $1\leq q \leq 12$ we show this by Table 1.

\begin{center}
Table 1\\
\begin{tabular}{|c||c|c|c|c|c|c|c|c|c|c|c|c|}
\hline
 $q$ &1&2&3&4&5&6&7&8&9&10&11&12
\\
\hline $(\lceil \frac{q-1}{4}\rceil-\lceil \frac{q}{6} \rceil+1)
\delta_2.$ &0&$\delta _2$&$\delta _2$&$\delta _2$&$\delta
_2$&$2\delta _2$&$\delta _2$&$\delta _2$&$\delta _2$&$2\delta
_2$&$2\delta _2$&$2\delta _2$
\\
\hline $\lceil \frac{q}{6} \rceil-1 -2\lfloor \frac{q}{4}\rfloor
+\lfloor \frac{q}{2}\rfloor$ &0&1&1&0&0&1&2&1&1&2&2&1\\ \hline
\end{tabular}
\end{center}

Thus  inequality (\ref{eq5}) is an improvement of inequality (\ref{eq1}). We next show that inequality (\ref{eq6}) is an improvement of inequality (\ref{eq2}).  We will show that :
\[  \delta_1+(\lceil k/4 \rceil -1) \max\{2,\delta_1\}+\lfloor(k-1)/2\rfloor -2\lfloor (k-1)/4\rfloor \ge \lceil k/6 \rceil(\delta_1+1)-1  \]
If $\delta_1=1$, then the above inequality becomes $1+2(\lceil k/4 \rceil -1) +\lfloor(k-1)/2\rfloor -2\lfloor (k-1)/4\rfloor=\lceil k/2 \rceil \ge 2\lceil \frac{k}{6} \rceil-1$ which is valid for any $k\geq 1$. Thus we assume that $\delta_1\ge 2$. It is sufficient to show that
\[( \lceil \frac{k}{4}\rceil-\lceil \frac{k}{6} \rceil) \delta_1 \geq \lceil \frac{k}{6} \rceil-1 -\lfloor \frac{k-1}{2}\rfloor +2\lfloor \frac{k-1}{4}\rfloor \] for $k\geq 1$. Let  $L=( \lceil \frac{k}{4}\rceil-\lceil \frac{k}{6} \rceil) \delta_1$ and $ R=\lceil \frac{k}{6} \rceil-1 -\lfloor \frac{k-1}{2}\rfloor +2\lfloor \frac{k-1}{4}\rfloor$. We thus need to show that $L\geq R$. Let $k=12p+q$, where $1\leq q \leq 12$. Then
\[  L=( \lceil \frac{k}{4}\rceil-\lceil \frac{k}{6} \rceil) \delta_1= p \delta_1+( \lceil \frac{q}{4}\rceil-\lceil \frac{q}{6} \rceil) \delta_1.    \]
\[  R=\lceil \frac{k}{6} \rceil-1 -\lfloor \frac{k-1}{2}\rfloor +2\lfloor \frac{k-1}{4}\rfloor =2p+\lceil \frac{q}{6} \rceil-1 -\lfloor \frac{q-1}{2}\rfloor +2\lfloor \frac{q-1}{4}\rfloor. \]
Since $\delta_1\geq 2$, we have $p\delta_1\ge 2p$. Thus we it is sufficient to show that $ ( \lceil \frac{q}{4}\rceil-\lceil \frac{q}{6} \rceil) \delta_1\ge \lceil \frac{q}{6} \rceil-1 -\lfloor \frac{q-1}{2}\rfloor +2\lfloor \frac{q-1}{4}\rfloor.$ We do this in Table 2, since $1\leq q \leq 12$.
\begin{center}
Table 2\\
\begin{tabular}{|c||c|c|c|c|c|c|c|c|c|c|c|c|}
\hline $q$ &1&2&3&4&5&6&7&8&9&10&11&12
\\
\hline
$( \lceil \frac{q}{4}\rceil-\lceil \frac{q}{6} \rceil) \delta_1$ &0&0&0&0&$\delta_1$&$\delta_1$&0&0&$\delta_1$&$\delta_1$&$\delta_1$&$\delta_1$
\\
\hline
$\lceil \frac{q}{6} \rceil-1 -\lfloor \frac{q-1}{2}\rfloor +2\lfloor \frac{q-1}{4}\rfloor$ &0&0&-1&-1&0&0&0&0&1&1&0&0
\\
\hline
\end{tabular}
\end{center}

Thus (\ref{eq6}) is an improvement of (\ref{eq2}).

The proof of part ($ii$), (i.e.  (\ref{eq7}) and
(\ref{eq8})) is similar and straightforward, and therefore is
omitted.
\end{proof}

\begin{theorem}
If $G$ is a bipartite  graph and $k$ is a positive integer, then
 $$ \gamma_k(G)\leq \underset{(p_1,p_2)\in (0,1)^2} \min h^*(p_1,p_2),$$ where
 \begin{align*}
  h^*(p_1,p_2)&=n_1p_1+n_1e^{-p_1(A_{11}+1)-p_2A_{12}} + n_2p_2+n_2e^{-p_1A_{21}-p_2(A_{22}+1)},   \\
A_{11}&=\lceil (k-1)/4 \rceil \max\{2,\delta_2\}+2\lfloor k/4\rfloor -\lfloor k/2\rfloor  \\
A_{12}&=\delta_1+(\lceil k/4 \rceil -1) \max\{2,\delta_1\}+\lfloor(k-1)/2\rfloor -2\lfloor (k-1)/4\rfloor  \\
A_{21}&=\delta_2+(\lceil k/4 \rceil -1) \max\{2,\delta_2\}+\lfloor(k-1)/2\rfloor -2\lfloor (k-1)/4\rfloor \\
A_{22}&=\lceil (k-1)/4 \rceil \max\{2,\delta_1\}+2\lfloor k/4\rfloor -\lfloor k/2\rfloor
\end{align*}

This bound improve the bound given in Theorem \ref{thm2.7}.
\end{theorem}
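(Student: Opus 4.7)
The plan is to mirror the probabilistic argument behind Theorem \ref{thm2.1}, but to split the random experiment across the bipartition and then plug in the stronger lower bounds on $|N_k(v)\cap V_j|$ supplied by Lemma \ref{m1} in place of those from Lemma \ref{le1}. Concretely, I would sample a random set $S\subseteq V$ by independently placing each vertex of $V_1$ into $S$ with probability $p_1$ and each vertex of $V_2$ into $S$ with probability $p_2$. Let $T=\{v\in V:N_k[v]\cap S=\emptyset\}$ be the set of vertices that are not $k$-dominated by $S$. Then $S\cup T$ is a $k$-distance dominating set, so $\gamma_k(G)\le\mathbb{E}[|S\cup T|]=\mathbb{E}[|S|]+\mathbb{E}[|T|]$ by linearity of expectation.

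Next I would compute $\mathbb{E}[|S|]=n_1p_1+n_2p_2$ and estimate $\mathbb{E}[|T|]$ vertex by vertex. First, if some $v$ has $N_k[v]=V$, then $\{v\}$ already $k$-dominates $G$ so $\gamma_k(G)=1$ and the statement is trivial; hence I may assume $N_k[v]\ne V$ for every $v$, which is precisely the hypothesis of Lemma \ref{m1}. For $v\in V_1$ one has
\[
\Pr[v\in T]=(1-p_1)^{1+|N_k(v)\cap V_1|}(1-p_2)^{|N_k(v)\cap V_2|}\le(1-p_1)^{A_{11}+1}(1-p_2)^{A_{12}},
\]
by the monotonicity of $(1-p)^x$ combined with inequalities \eqref{eq5}--\eqref{eq6}. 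Applying $1-x\le e^{-x}$ yields $\Pr[v\in T]\le e^{-p_1(A_{11}+1)-p_2A_{12}}$, and the symmetric computation for $v\in V_2$ uses \eqref{eq7}--\eqref{eq8} to give $\Pr[v\in T]\le e^{-p_1A_{21}-p_2(A_{22}+1)}$.

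Summing over $V_1$ and $V_2$ produces
\[
\gamma_k(G)\le n_1p_1+n_2p_2+n_1e^{-p_1(A_{11}+1)-p_2A_{12}}+n_2e^{-p_1A_{21}-p_2(A_{22}+1)}=h^*(p_1,p_2)
\]
for every $(p_1,p_2)\in(0,1)^2$; taking the infimum on the right gives the claimed bound. For the improvement over Theorem \ref{thm2.7}, I would observe that the constants $A_{11},A_{12},A_{21},A_{22}$ dominate the corresponding constants $(\lceil k/6\rceil-1)(\delta_j+1)$ and $\lceil k/6\rceil(\delta_j+1)-1$ appearing in the function $h$ of Theorem \ref{thm2.7} — this is exactly the content of the comparisons already carried out in the proof of Lemma \ref{m1}. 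Consequently the exponentials in $h^*$ are pointwise no larger than those in $h$, so $h^*(p_1,p_2)\le h(p_1,p_2)$ for every $(p_1,p_2)$, and therefore the minimum of $h^*$ is at most the minimum of $h$.

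The main obstacle I expect is purely bookkeeping: making sure the disjoint union of the bipartite neighborhoods is exploited correctly in the product $(1-p_1)^{|N_k(v)\cap V_1|}(1-p_2)^{|N_k(v)\cap V_2|}$, and that the ``$+1$'' accounting for the vertex $v$ itself is attached to the coordinate $p_j$ corresponding to the side that contains $v$. Once that is set up cleanly, the rest is just linearity of expectation and the inequality $1-x\le e^{-x}$, with Lemma \ref{m1} supplying the only nontrivial estimates.
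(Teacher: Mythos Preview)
Your proposal is correct and follows essentially the same route as the paper: the paper simply cites Theorem~\ref{thm2.1} (which packages the random-subset argument you carry out explicitly), then substitutes the bounds from Lemma~\ref{m1} and applies $1-x\le e^{-x}$, and finally observes that $h^*\le h$ pointwise because $\exp(-x)$ is decreasing and the $A_{ij}$ dominate the corresponding constants from Lemma~\ref{le1}. Your treatment is in fact slightly more careful in isolating the degenerate case $N_k[v]=V$ before invoking Lemma~\ref{m1}; otherwise the two arguments coincide.
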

\begin{proof}
By Theorem \ref{thm2.1}, we have
\begin{align*}
\gamma_k(G)\leq \min_{(p_1,p_2)\in (0,1)^2}\bigg(&\sum_{v\in V_1}\big [p_1+(1-p_1)^{|N_k(v)\cap V_1|+1}(1-p_2)^{|N_k(v)\cap V_2|}\big ] \\
&\qquad +\sum_{v\in V_2}\big [p_2+(1-p_1)^{|N_k(v)\cap V_1|}(1-p_2)^{|N_k(v)\cap V_2|+1}\big ] \bigg).\end{align*}

By Lemma \ref{m1}, we have
\begin{align*}
\gamma_k(G)&\leq \min_{(p_1,p_2)\in (0,1)^2}\bigg(\sum_{v\in V_1}\big [p_1+(1-p_1)^{A_{11}+1}(1-p_2)^{A_{12}}\big ] \\
&\qquad \qquad \qquad \qquad +\sum_{v\in V_2}\big [p_2+(1-p_1)^{A_{21}}(1-p_2)^{A_{22}+1}\big ] \bigg)  \\
&\le\min_{(p_1,p_2)\in (0,1)^2}\bigg(\big [n_1p_1+n_1(1-p_1)^{A_{11}+1}(1-p_2)^{A_{12}}\big ] \\
&\qquad \qquad \qquad \qquad +\big [n_2p_2+n_2(1-p_1)^{A_{21}}(1-p_2)^{A_{22}+1}\big ] \bigg) \\
&\le\min_{(p_1,p_2)\in (0,1)^2}\bigg( n_1p_1+n_1e^{-p_1(A_{11}+1)-p_2A_{12}} + n_2p_2+n_2e^{-p_1A_{21}-
p_2(A_{22}+1)} \bigg).
\end{align*}

That is $ \gamma_k(G)\leq \underset{(p_1,p_2)\in (0,1)^2} \min h^*(p_1,p_2).  $ To show that our bound is an improvement of the bound given in Theorem \ref{thm2.7}, note that by Lemma \ref{m1} one can easily see that $h^*(p_1,p_2)\leq h(p_1,p_2)$, since $\exp(-x)$ is a decreasing function.
\end{proof}
\begin{example}\label{example} It remains to show that there are perfect graphs that our bound is better than the older one. For this purpose, let $G$ be a connected bipartite graph with $ n_1=n_2=\frac{n}{2}$, $\delta_1=\delta_2=\delta\ge 2$, and $k=4m+1$ with $m=1,2,3,\cdots$. We can easily see that the graph is perfect. Now we have $A_{11}=A_{22}=m\delta , A_{12}=A_{21}=(m+1)\delta$ and
$$h^*(p_1,p_2)=\frac{n}{2}[p_1+p_2+e^{-p_1(m\delta+1)-p_2(m+1)\delta} +e^{-p_1(m+1)\delta-p_2(m\delta+1)}].$$
By using of calculus method, we see that the unique minimum of $h^*$ occurs at $$p_1=p_2=\displaystyle \frac{\ln[(2m+1)\delta+1]}{(2m+1)\delta+1},$$ since $0<p_1=p_2<1$, we have $\underset{(p_1,p_2)\in (0,1)^2}{\min} h^*(p_1,p_2)=n(\displaystyle \frac{1+\ln[(2m+1)\delta+1]}{(2m+1)\delta+1})$. By calculus we know that the function $f(x)=\frac{1+\ln x}{x}$ is decreasing on interval $(1,\infty)$ and also we have $(2m+1)\delta+1\ge (2\lceil k/6 \rceil-1)(\delta+1)$,  thus the new bound refinements the bound in Theorem \ref{thm2.7}.
\end{example}
%
%
\subsection{Minimizing $h^*(p_1,p_2)$}

In this part of paper we wish to minimize $h^*(p_1,p_2)$. For this purpose, we consider two different cases and we use calculation  methods.

\subsubsection{ $k$ is even}

In this case we will show that either $h^*$ hasn't local extremum or it has infinitely local minimum on $(0,1)^2$. However $h^*$ has local minimum on closed unit square $[0,1]^2$, thus we extend the domain of $h^*$ into $[0,1]^2$.

Before introducing our main results, we explain an observation in calculus :
\begin{observation} \label{obs} Consider the function $f(x)=\frac{a+\ln x}{x}$ where $x>0$ and $a>0$. $f$ has a unique maximum in $x=e^{1-a}\le e$ thus $f(x)\le f(e^{1-a})=e^{a-1}$. Now, if $a< 1$ then $f(x)< 1$ for all $x>0$.
\end{observation}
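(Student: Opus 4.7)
The plan is to treat this as a routine single-variable optimization on $(0,\infty)$. First I would differentiate $f(x)=(a+\ln x)/x$ via the quotient rule to get
\[
f'(x)=\frac{(1/x)\cdot x-(a+\ln x)}{x^{2}}=\frac{1-a-\ln x}{x^{2}},
\]
then solve $f'(x)=0$. Since $x^{2}>0$, the critical-point equation reduces to $\ln x=1-a$, which has the unique solution $x_{0}=e^{1-a}$.

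Next I would establish that $x_{0}$ is a global maximum rather than merely a local one. Because the sign of $f'(x)$ matches the sign of $1-a-\ln x$, and $\ln$ is strictly increasing, $f'(x)>0$ for $x<e^{1-a}$ and $f'(x)<0$ for $x>e^{1-a}$. Hence $f$ is strictly increasing on $(0,e^{1-a})$ and strictly decreasing on $(e^{1-a},\infty)$, so $x_{0}$ is the unique global maximum. Substituting gives
\[
f(e^{1-a})=\frac{a+(1-a)}{e^{1-a}}=e^{a-1},
\]
and the inequality $e^{1-a}\le e$ is just the statement $1-a\le 1$, which is immediate from $a>0$.

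For the final clause, observe that when $a<1$ the exponent $a-1$ is negative, so $e^{a-1}<e^{0}=1$. Combined with the bound $f(x)\le e^{a-1}$ just established, this yields $f(x)<1$ for every $x>0$, as claimed. There is no real obstacle here: the whole statement is a direct consequence of elementary calculus applied to a function with exactly one critical point, and the only thing to be careful about is verifying that the critical point is a maximum rather than a minimum, which the sign analysis of $f'$ handles cleanly.
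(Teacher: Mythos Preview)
Your argument is correct and complete: the derivative computation, the sign analysis showing the unique critical point is a global maximum, the evaluation $f(e^{1-a})=e^{a-1}$, and the final comparison with $1$ when $a<1$ are all right. The paper itself offers no proof of this observation---it is simply stated---so your routine calculus verification is exactly the intended justification.
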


Our main result in this states is :

\begin{theorem}
If $k$ is an even integer, $\delta_1,\delta_2 \ge 2$ and  $T=\max\{\frac{nA_{12}}{n_2},\frac{nA_{21}}{n_1}\}$, in each of cases \\
(i) $\frac{nA_{12}}{n_2}=\frac{nA_{21}}{n_1}$
\\ (ii) $\frac{nA_{12}}{n_2}<\frac{nA_{21}}{n_1}$  and   $\frac{1}{A_{21}}\ln \frac{nA_{21}}{n_1}<1$
\\
(iii) $\frac{nA_{12}}{n_2}>\frac{nA_{21}}{n_1}$ and $\frac{1}{A_{12}}\ln \frac{nA_{12}}{n_2}<1$
\\we have  $\underset{(p_1,p_2)\in (0,1)^2} \inf\  h^*(p_1,p_2)=\underset{(p_1,p_2)\in [0,1]^2} \min\  h^*(p_1,p_2)=n(\frac{1+\ln T}{T}).$
\end{theorem}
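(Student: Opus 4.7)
The plan is to exploit an algebraic coincidence that occurs when $k$ is even to collapse $h^*$ into a single-exponential convex function, after which elementary calculus yields the formula. First I would verify, splitting into the subcases $k=4m$ and $k=4m+2$ and using $\delta_1,\delta_2\ge 2$ (so that $\max\{2,\delta_j\}=\delta_j$), the two identities $A_{11}+1=A_{21}$ and $A_{12}=A_{22}+1$. Setting $A:=A_{21}$ and $B:=A_{12}$, these collapse the two exponential summands of $h^*$ into one and give
$$h^*(p_1,p_2)=n_1p_1+n_2p_2+n\,e^{-p_1A-p_2B},$$
which is convex on $[0,1]^2$ as the sum of an affine function and the composition of $\exp$ with an affine function. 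In particular every local minimum is global, and by continuity the infimum over $(0,1)^2$ equals the minimum over the compact set $[0,1]^2$.

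Next I would compute the gradient, $\partial_{p_1}h^*=n_1-nA\,e^{-p_1A-p_2B}$ and $\partial_{p_2}h^*=n_2-nB\,e^{-p_1A-p_2B}$; both vanish iff $e^{-p_1A-p_2B}$ simultaneously equals $1/T_1$ and $1/T_2$, where $T_1:=nA/n_1$ and $T_2:=nB/n_2$. This is consistent precisely when $T_1=T_2$, i.e.\ in case (i), and then the critical set is the whole line $p_1A+p_2B=\ln T$. On this line the identity $n_1p_1+n_2p_2=(n_1/A)(p_1A+p_2B)$ (which is valid exactly because $n_1/A=n_2/B$), combined with $n_1/A=n/T$, gives $n_1p_1+n_2p_2=(n/T)\ln T$, so $h^*\equiv n(1+\ln T)/T$ on the critical line, which proves case (i).

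For cases (ii) and (iii) there is no interior critical point, so by convexity the minimum is attained on $\partial[0,1]^2$; the two cases are symmetric under the interchange $(p_1,A,n_1)\leftrightarrow(p_2,B,n_2)$, so it suffices to treat case (ii), where $T=T_1>T_2$. On the edge $p_2=0$ the one-variable function $n_1p_1+n\,e^{-p_1A}$ is minimized at $p_1^*=(\ln T)/A$, and the hypothesis $\tfrac{1}{A_{21}}\ln T<1$ is exactly what places $p_1^*\in(0,1)$; the minimum value works out to $n(1+\ln T)/T$. To confirm that $(p_1^*,0)$ is a global minimizer on $[0,1]^2$, I would check the boundary-optimality condition $\partial_{p_2}h^*(p_1^*,0)=n_2-nB/T=n_2(1-T_2/T)>0$, which holds since $T_2<T$ in case (ii); together with $\partial_{p_1}h^*(p_1^*,0)=0$ and convexity of $h^*$, this rules out any smaller value elsewhere in $[0,1]^2$, and the equality of the infimum on $(0,1)^2$ with this minimum follows by approaching $(p_1^*,0)$ along interior sequences. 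The main obstacle is really the algebraic identity $A_{11}+1=A_{21}$, $A_{12}=A_{22}+1$ for $k$ even, for which the assumption $\delta_j\ge 2$ is essential; once this simplification is in hand the remainder is a routine convex-optimization exercise.
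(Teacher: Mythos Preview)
Your proposal is correct and follows the paper's overall strategy: reduce $h^*$ to a single-exponential function via the identities $A_{11}+1=A_{21}$ and $A_{12}=A_{22}+1$ (split into $k\equiv 0$ and $k\equiv 2 \pmod 4$, using $\delta_j\ge 2$), and then optimize. The paper does exactly this reduction and the same gradient computation.

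Where you diverge is in the boundary analysis for cases (ii) and (iii). The paper argues geometrically: the two critical lines $h^*_{p_1}=0$ and $h^*_{p_2}=0$ are parallel, and for each fixed $p_2$ (resp.\ $p_1$) one can slide $p_1$ (resp.\ $p_2$) to the corresponding critical line or to the boundary without increasing $h^*$; the intersection of these two ``projected'' sets is the single point $M_1=(\tfrac{1}{A_{21}}\ln\tfrac{nA_{21}}{n_1},0)$ (resp.\ $N_2$), which is therefore the minimizer. You instead invoke convexity of $h^*$ globally and verify the KKT sign condition $\partial_{p_2}h^*(p_1^*,0)>0$ at the edge critical point. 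Your argument is shorter and conceptually cleaner, since convexity makes the first-order boundary condition sufficient for global optimality; the paper's coordinate-wise sliding is effectively a hands-on proof of the same convexity consequence, but without naming it.

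One small omission in your case (i): you should check that the critical line $p_1A+p_2B=\ln T$ actually meets $[0,1]^2$, i.e.\ that $\min\{(\ln T)/A,(\ln T)/B\}<1$. The paper handles this via an elementary observation on the function $x\mapsto (a+\ln x)/x$; in your framework it follows since, say, $(\ln T_1)/A=\tfrac{1}{A}\ln\tfrac{nA}{n_1}<1$ because $\ln(n/n_1)<1$ and $(\ln x)/x<1$ for $x>0$. Without this, convexity alone would only tell you the minimum is on the boundary, not that it equals $n(1+\ln T)/T$.
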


\begin{proof}We assume that  $k \overset{4} \equiv 0 $, then  \[ A_{11}=k \delta_2/4 , \quad  A_{12}= k \delta_1/4 +1 ,\quad   A_{21}= k \delta_2/4+1, \quad A_{22}=k \delta_1/4 \]
and if $k \overset{4} \equiv 2$, then
\[ A_{11}=(k+2) \delta_2/4-1 , \quad  A_{12}= (k+2) \delta_1/4  ,\quad   A_{21}= (k+2) \delta_2/4, \quad A_{22}= (k+2) \delta_1/4-1 \]
 thus, in both cases we have $A_{11}+1=A_{21}$ and  $A_{22}+1=A_{12}$, and therefore
 \[  h^*(p_1,p_2)=n_1p_1+n_2p_2+ne^{-p_1A_{21}-p_2A_{12}}   \]

To minimize $h^*(p_1,p_2)$, using partial differential, we have
$h^*_{p_1}=n_1-nA_{21}e^{-p_1A_{21}-p_2A_{12}}$ and $h^*_{p_2}=n_2-nA_{12}e^{-p_1A_{21}-p_2A_{12}}$. Then from  $h^*_{p_1}=0$, we obtain that $e^{-p_1A_{21}-p_2A_{12}}=\frac{n_1}{nA_{21}}$, and so $p_1A_{21}+p_2A_{12}=\ln \frac{nA_{21}}{n_1}$. Likewise, from $ h^*_{p_2}=0$, we obtain $p_1A_{21}+p_2A_{12}=\ln \frac{nA_{12}}{n_2}$.

(i) If $\frac {A_{21}}{n_1}=\frac {A_{12}}{n_2}$ then $h^*$ is constant for all $p_1$ and $p_2$ with $p_1A_{21}+p_2A_{12}=\ln \frac{nA_{12}}{n_2}=\ln \frac{nA_{21}}{n_1}$, as the following shows:.
\begin{align*}
h^*(p_1,p_2)&=n_1p_1+n_2p_2+ne^{-p_1A_{21}-p_2A_{12}}\\
&=\frac{n_1}{A_{21}}(p_1A_{21}+p_2A_{12})+ne^{-p_1A_{21}-p_2A_{12}}\\
&=\frac{n_1}{A_{21}}\ln \frac{nA_{21}}{n_1}+ne^{-\ln \frac{nA_{21}}{n_1}}=\frac{n_1}{A_{21}}(1+\ln \frac{nA_{21}}{n_1})\\
\end{align*}

Note that two points $( 0 , \frac{1}{A_{12}}\ln \frac{nA_{12}}{n_2})$ and $( \frac{1}{A_{21}}\ln \frac{nA_{21}}{n_1} , 0 )$ are located on the line $p_1A_{21}+p_2A_{12}=\ln \frac{nA_{12}}{n_2}=\ln \frac{nA_{21}}{n_1}$ and by Observation \ref{obs} we have $0<\min\{\frac{1}{A_{12}}\ln \frac{nA_{12}}{n_2}, \frac{1}{A_{21}}\ln \frac{nA_{21}}{n_1}  \}<1$ because $0<\min\{\ln \frac{n}{n_2}, \ln \frac{n}{n_1}  \}<1$.
\begin{center}
\begin{tikzpicture}[scale=.9,very  thick]
\draw[ very thick] (-8,2) -- (-8,-2)  (-4,2) -- (-4,-2)   (-8,2) -- (-4,2)   (-8,-2)--(-4,-2);
\draw[very thick,red](-5.45,-2)--(-8,2.6){};
\draw [>=stealth,->]   (-4,-2)-- (-3,-2);
\draw [>=stealth,->]   (-8,2)-- (-8,3);
\node at (-8,3.25) { $p_2$};
\node at (-2.75,-2) { $p_1$};
\node at (-4,2.4) { $(1,1)$};
\node at (-5.95,-1) { $h^*_{p_2}=0$};
\node at (-6.5,-.25) { $h^*_{p_1}=0$};
\node at (-6,1.5) { $\frac{A_{21}}{n_1}=\frac{A_{12}}{n_2}$};
\node at (-5.5,-2.5) { $ \frac{1}{A_{21}}\ln \frac{nA_{21}}{n_1}<1<\frac{1}{A_{12}}\ln \frac{nA_{12}}{n_2}$};

 \draw[ very thick] (-2,2) -- (-2,-2)  (2,2) -- (2,-2)   (-2,2) -- (2,2)   (-2,-2)--(2,-2);
\draw[very thick,red](0.55,-2)--(-2,1){};
\draw [>=stealth,->]   (2,-2)-- (3,-2);
\draw [>=stealth,->]   (-2,2)-- (-2,3);
\node at (-2,3.25) { $p_2$};
\node at (3.25,-2) { $p_1$};
\node at (2,2.4) { $(1,1)$};
\node at (-.25,-1) { $h^*_{p_2}=0$};
\node at (-1,-.25) { $h^*_{p_1}=0$};
\node at (0,1.5) { $\frac{A_{21}}{n_1}=\frac{A_{12}}{n_2}$};
\node at (.5,-2.5) { $ \frac{1}{A_{21}}\ln \frac{nA_{21}}{n_1},\frac{1}{A_{12}}\ln \frac{nA_{12}}{n_2}<1$};
\draw[ very thick] (4,2) -- (4,-2)  (8,2) -- (8,-2)   (4,2) -- (8,2)   (4,-2)--(8,-2);
\draw[very thick,red](8.55,-2)--(4,1){};
\draw [>=stealth,->]   (8,-2)-- (9,-2);
\draw [>=stealth,->]   (4,2)-- (4,3);
\node at (4,3.25) { $p_2$};
\node at (9.25,-2) { $p_1$};
\node at (8,2.4) { $(1,1)$};
\node at (6.75,-1) { $h^*_{p_2}=0$};
\node at (5.75,-.25) { $h^*_{p_1}=0$};
\node at (6,1.5) { $\frac{A_{21}}{n_1}=\frac{A_{12}}{n_2}$};
\node at (6.5,-2.5) { $ \frac{1}{A_{12}}\ln \frac{nA_{12}}{n_2}<1<\frac{1}{A_{21}}\ln \frac{nA_{21}}{n_1}$};

 \end{tikzpicture}

\end{center}

Thus the minimum of $h^*(p_1,p_2)$ is $\frac{n_1}{A_{21}}(1+\ln \frac{nA_{21}}{n_1})$, and note that it happens for every pairs $(p_1,p_2)\in (0,1)^2$, satisfying $h^*_{p_1}=h^*_{p_2}=0$. Now letting $T=\frac {nA_{21}}{n_1}=\frac {nA_{12}}{n_2}$, we obtain that  $\underset{p_1,p_2} \min\  h^*(p_1,p_2)=n(\frac{1+\ln T}{T})$ as desired.

 If $\frac {A_{21}}{n_1}\ne \frac {A_{12}}{n_2}$ then $p_1A_{21}+p_2A_{12}=\ln \frac{nA_{21}}{n_1}$ and $p_1A_{21}+p_2A_{12}=\ln \frac{nA_{12}}{n_2}$ are two distinct parallel lines in the $p_1p_2$-coordinate system. Thus, $ h^*$ has no extremum in $(0,1)^2$ but it has an infimum value in $(0,1)^2$. For this purpose we seek the extremum of $ h^*$ in $[0,1]^2$. Observe that the line $p_1A_{21}+p_2A_{12}=\ln \frac{nA_{21}}{n_1}$ intersects the $p_1$-axis in $M_1=(\frac{1}{A_{21}}\ln \frac{nA_{21}}{n_1},0)$ and $p_2-$axis in $N_1=(0,\frac{1}{A_{12}}\ln \frac{nA_{21}}{n_1})$. Similarly, the line $p_1A_{21}+p_2A_{12}=\ln \frac{nA_{12}}{n_2}$ intersects the $p_1$-axis in $M_2=(\frac{1}{A_{21}}\ln \frac{nA_{12}}{n_2},0)$ and $p_2$-axis in $N_2=(0,\frac{1}{A_{12}}\ln \frac{nA_{12}}{n_2})$. Moreover, let $Q_1=(1,0)$ and $Q_2=(0,1)$.

\begin{center}
\begin{tikzpicture}[scale=1,very  thick]

\draw[ very thick] (-2,2) -- (-2,-2)  (2,2) -- (2,-2)   (-2,2) -- (2,2)   (-2,-2)--(2,-2);
\draw[very thick,red](1.5,-2)--(-2,1){};
\draw[very thick,red](-2,2)--(-2,1){};
\draw[ thick,blue](0,-2)--(-2,-.33){};
\draw[ thick,blue](0,-2)--(2,-2){};
\draw [>=stealth,->]   (2,-2)-- (3,-2);
\draw [>=stealth,->]   (-2,2)-- (-2,3);
\node at (0.5,-3) {\textcolor{magenta}{\textbf{Minimum occures in $M_1$}}};
\node at (3.3,-2) { $p_1$};
\node at (-2,3.25) { $p_2$};
\node at (1.6,-2.4) { $M_1$};
\node at (0.1,-2.4) { $M_2$};
\node at (2.3,-2.4) { $Q_1$};
\node at (-2.4,1) { $N_1$};
\node at (-2.4,-.33) { $N_2$};
\node at (-2.4,2) { $Q_2$};
\node at (2,2.4) { $(1,1)$};
\node at (0.25,-.8) { $h^*_{p_1}=0$};
\node at (-.8,-1.5) { $h^*_{p_2}=0$};
\node at (0,1.5) { $\frac{A_{21}}{n_1}>\frac{A_{12}}{n_2}$};

\draw[ very thick] (5,2) -- (5,-2)  (9,2) -- (9,-2)   (5,2) -- (9,2)   (5,-2)--(9,-2);
\draw[very thick,blue](8.5,-2)--(5,1){};
\draw[very thick,red](5,2)--(5,-.33){};
\draw[ thick,red](7,-2)--(5,-.33){};
\draw[ thick,blue](8.5,-2)--(9,-2){};
\draw [>=stealth,->]   (9,-2)-- (10,-2);
\draw [>=stealth,->]   (5,2)-- (5,3);
\node at (7.5,-3) {\textcolor{magenta}{\textbf{Minimum occures in $N_2$}}};
\node at (10.3,-2) { $p_1$};
\node at (5,3.25) { $p_2$};
\node at (8.6,-2.4) { $M_2$};
\node at (7.1,-2.4) { $M_1$};
\node at (9.3,-2.4) { $Q_1$};
\node at (4.6,1) { $N_2$};
\node at (4.6,-.33) { $N_1$};
\node at (4.6,2) { $Q_2$};
\node at (9,2.4) { $(1,1)$};
\node at (6,-1.3) { $h^*_{p_1}=0$};
\node at (6.1,0) { $h^*_{p_2}=0$};
\node at (7,1.5) { $\frac{A_{21}}{n_1}<\frac{A_{12}}{n_2}$};
\end{tikzpicture}
\end{center}

(ii) $\frac{nA_{12}}{n_2}<\frac{nA_{21}}{n_1}$  and   $\frac{1}{A_{21}}\ln \frac{nA_{21}}{n_1}<1$ we prove that the minimum of $h^*$ occurs in $M_1$.
For each point $(p_1,p_2)$ in unit square $[0,1]^2$ there is a unique point $(p'_1,p_2)$ on segments $M_1N_1$ or $N_1Q_2$ such that $h^*(p'_1,p_2)\le h^*(p_1,p_2)$ then the minimum of $h^*$ occurs on $M_1N_1\cup N_1Q_2$ also  there is a unique point $(p_1,p'_2)$ on segments $M_2N_2$ or $M_2Q_1$ such that $h^*(p_1,p'_2)\le h^*(p_1,p_2)$ then the minimum of $h^*$ occurs on $M_2N_2\cup M_2Q_1$ . This two sets of points intersect in one point, $M_1$ that is, $h(M_1)\le h^*(p_1,p_2)$ and  $$h^*(M_1)=h(\frac{1}{A_{21}}\ln \frac{nA_{21}}{n_1},0)=\frac{n_1}{A_{21}}(1+\ln \frac{nA_{21}}{n_1})$$

(iii) If $\frac{nA_{12}}{n_2}>\frac{nA_{21}}{n_1}$ and $\frac{1}{A_{12}}\ln \frac{nA_{12}}{n_2}<1$ we prove that the minimum of $h^*$ occurs in $N_2$.
For each point $(p_1,p_2)$ in unit square $[0,1]^2$ there is a unique point $(p'_1,p_2)$ on segments $M_1N_1$ or $N_1Q_2$ such that $h^*(p'_1,p_2)\le h^*(p_1,p_2)$, then the minimum of $h^*$ occurs on $M_1N_1\cup N_1Q_2$ also  there is a unique point $(p_1,p'_2)$ on segments $M_2N_2$ or $M_2Q_1$ such that $h^*(p_1,p'_2)\le h^*(p_1,p_2)$ then the minimum of $h^*$ occurs on $M_2N_2\cup M_2Q_1$. This two sets of points intersect in one point, $N_2$, that is, $h^*(N_2)\le h^*(p_1,p_2)$ and
$$h^*(N_2)=h^*(0,\frac{1}{A_{12}}\ln \frac{nA_{12}}{n_2})=\frac{n_2}{A_{12}}(1+\ln \frac{nA_{12}}{n_2}).$$

In each of three cases, if we set $T=\max\{\frac{nA_{12}}{n_2},\frac{nA_{21}}{n_1}\}$ then we have :
$$\underset{p_1,p_2} \min\  h^*(p_1,p_2)=n(\frac{1+\ln T}{T})$$
\end{proof}
we now pose a problem.\\
\textbf{Problem} 1. Minimize $h^*$ if $\delta_1=1$ or $\delta_2 =1$.

\subsubsection{ $k$ is odd}

We assume that $k$ is an odd integer and we wish to minimize $h^*(p_1,p_2)$. For this purpose, we use calculus methodes.
\[  \left\{ \begin{array}{ll} h_{p_1}=n_1-n_1(A_{11}+1)e^{-p_1(A_{11}+1)-p_2A_{12}} - n_2A_{21}e^{-p_1A_{21}-p_2(A_{22}+1)} \\  h_{p_2}=-n_1A_{12}e^{-p_1(A_{11}+1)-p_2A_{12}} + n_2-n_2(A_{22}+1)e^{-p_1A_{21}-p_2(A_{22}+1)} \end{array} \right.   \]
\[  \left\{ \begin{array}{ll} h_{p_1}=0 \\  h_{p_2}=0 \end{array} \right. \Longrightarrow \left\{ \begin{array}{ll} n_1(A_{11}+1)e^{-p_1(A_{11}+1)-p_2A_{12}} + n_2A_{21}e^{-p_1A_{21}-p_2(A_{22}+1)}=n_1 \\  n_1A_{12}e^{-p_1(A_{11}+1)-p_2A_{12}}+n_2(A_{22}+1)e^{-p_1A_{21}-p_2(A_{22}+1)}= n_2 \end{array} \right.  \]

Therefore we have:
\[\left\{ \begin{array}{ll}e^{-p_1A_{21}-p_2(A_{22}+1)}=\displaystyle \frac{n_2(A_{11}+1)-n_1A_{12}}{n_2[A_{12}A_{21}-(A_{11}+1)(A_{22}+1)]} \\e^{-p_1(A_{11}+1)-p_2A_{12}}=\displaystyle \frac{n_1(A_{22}+1)-n_2A_{21}}{n_1[A_{12}A_{21}-(A_{11}+1)(A_{22}+1)]} \end{array} \right.  \]
%
%
Let $E_1=\displaystyle \frac{n_2(A_{11}+1)-n_1A_{12}}{n_2[A_{12}A_{21}-(A_{11}+1)(A_{22}+1)]},\quad E_2=\displaystyle \frac{n_1(A_{22}+1)-n_2A_{21}}{n_1[A_{12}A_{21}-(A_{11}+1)(A_{22}+1)]}$.

If $E_1>0$ and $E_2>0$, then we have a linear equations system  $\left\{ \begin{array}{ll}p_1A_{21}+p_2(A_{22}+1)=-\ln E_1  \\  p_1(A_{11}+1)+p_2A_{12}=-\ln E_2 \end{array} \right.$ with a unique answer and we set : \[  \left\{ \begin{array}{ll}P_1=\displaystyle \frac{(A_{22}+1)\ln E_2-A_{12}\ln E_1}{A_{12}A_{21}-(A_{11}+1)(A_{22}+1)}  \\  P_2=\displaystyle\frac{(A_{11}+1)\ln E_1-A_{21}\ln E_2}{A_{12}A_{21}-(A_{11}+1)(A_{22}+1)} \end{array} \right.  \]

\begin{definition}
A connected bipartite graph $G$ is called \textit{$4$-perfect} if $E_1>0$ , $E_2>0$ where $E_1=\displaystyle \frac{n_2(A_{11}+1)-n_1A_{12}}{n_2[A_{12}A_{21}-(A_{11}+1)(A_{22}+1)]}$ and $E_2=\displaystyle \frac{n_1(A_{22}+1)-n_2A_{21}}{n_1[A_{12}A_{21}-(A_{11}+1)(A_{22}+1)]}$.
\end{definition}
We thus obtain the following.
\begin{corollary}\label{cro3}
If $G$ is a $4$-perfect graph and , $0< P_1< 1$ and $0< P_2< 1$, then \[  \underset{(p_1,p_2)\in(0,1)^2}{\min} h^*(p_1,p_2)=h^*(P_1,P_2)=n_1 [ E_2+P_1  ] + n_2 [ E_1 +P_2 ].   \]
\end{corollary}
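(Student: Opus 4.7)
The plan is to follow the standard multivariable-calculus route already sketched just before the statement: locate the unique stationary point of $h^*$ in the interior of the unit square, verify it is a minimum by showing $h^*$ is convex, and then read off its value via substitution.

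First I would recall the derivation immediately preceding the statement: setting the gradient $\nabla h^*$ equal to zero reduces to the pair $e^{-p_1(A_{11}+1)-p_2A_{12}} = E_2$ and $e^{-p_1 A_{21}-p_2(A_{22}+1)} = E_1$. Under the $4$-perfect hypothesis both $E_1$ and $E_2$ are positive, so taking logarithms produces a nondegenerate $2\times 2$ linear system whose unique solution is $(P_1,P_2)$, and the hypothesis $0<P_1,P_2<1$ places this stationary point inside $(0,1)^2$.

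The next step, which I expect to be the main obstacle, is to certify that this stationary point is actually a minimum rather than a saddle point. I would compute the Hessian of $h^*$. Writing $e_1 = e^{-p_1(A_{11}+1)-p_2A_{12}}$ and $e_2 = e^{-p_1 A_{21}-p_2(A_{22}+1)}$, the diagonal entries $h^*_{p_1p_1} = n_1(A_{11}+1)^2 e_1 + n_2 A_{21}^2 e_2$ and $h^*_{p_2p_2} = n_1 A_{12}^2 e_1 + n_2(A_{22}+1)^2 e_2$ are clearly positive, while $h^*_{p_1p_2} = n_1 A_{12}(A_{11}+1)e_1 + n_2 A_{21}(A_{22}+1)e_2$. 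The key algebraic observation is that the cross terms in $h^*_{p_1p_1}h^*_{p_2p_2} - (h^*_{p_1p_2})^2$ collapse into the perfect square $n_1 n_2\,[(A_{11}+1)(A_{22}+1) - A_{12}A_{21}]^2\, e_1 e_2$, which is strictly positive because $4$-perfectness, through the denominators defining $E_1$ and $E_2$, forces $(A_{11}+1)(A_{22}+1)\ne A_{12}A_{21}$. Hence $h^*$ is strictly convex on $(0,1)^2$, so the interior stationary point $(P_1,P_2)$ is the unique global minimizer.

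Finally, to obtain the claimed value I would evaluate $h^*$ at $(P_1,P_2)$ using the critical-point identities: the two exponentials appearing in $h^*$ equal $E_2$ and $E_1$ at the minimizer, yielding $h^*(P_1,P_2) = n_1 P_1 + n_1 E_2 + n_2 P_2 + n_2 E_1 = n_1[E_2+P_1] + n_2[E_1+P_2]$, as required. Aside from the perfect-square factorization of the Hessian determinant, every step is either routine substitution or already carried out in the derivation preceding the corollary.
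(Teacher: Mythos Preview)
Your proposal is correct and follows the same route as the paper: the derivation of the stationary point $(P_1,P_2)$ that you recap is exactly the computation the paper carries out immediately before stating the corollary, and the final substitution giving $n_1[E_2+P_1]+n_2[E_1+P_2]$ is likewise straightforward from those critical-point identities.

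The one genuine addition in your write-up is the Hessian/convexity verification. The paper simply states the corollary after locating the critical point and does not check the second-order conditions at all; it implicitly treats ``unique critical point in $(0,1)^2$'' as synonymous with ``minimizer''. Your perfect-square factorization
\[
h^*_{p_1p_1}h^*_{p_2p_2}-(h^*_{p_1p_2})^2
= n_1 n_2\,e_1 e_2\,[(A_{11}+1)(A_{22}+1)-A_{12}A_{21}]^2
\]
together with the observation that the $4$-perfect hypothesis forces $A_{12}A_{21}\neq(A_{11}+1)(A_{22}+1)$ (since this quantity appears in the denominators of $E_1,E_2$) is exactly what is needed to close that gap, and it is correct. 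So your argument is not a different approach but a strictly more complete version of what the paper sketches.
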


Note that Corollary \ref{cro3} improve Theorem \ref{thm2.7} if $G$ is both perfect and $4$-perfect.

\begin{example}

 It remains to show that there are perfect graphs that are $4$-perfect as well.
For this purpose, we consider the graph introduced in Example  \ref{example}. Let $ n_1=n_2=\frac{n}{2}$, $\delta_1=\delta_2=\delta$, and $k=4m+1$. Then
 \[ E_1=E_2=\displaystyle \frac{1}{(2m+1)\delta+1} \text{ ,} P_1=P_2=\displaystyle \frac{\ln[(2m+1)\delta+1]}{(2m+1)\delta+1} \]
Since $E_1,E_2>0$, $G$ is $4$-perfect. It is also easy to see that $G$ is perfect.
\end{example}

\end{document}

7